\renewcommand{\phi}{\varphi}
\DeclarePairedDelimiter\floor{\lfloor}{\rfloor}
\DeclarePairedDelimiter\ceil{\lceil}{\rceil}
\setlist{noitemsep}
\numberwithin{figure}{section}
\numberwithin{equation}{section}
\theoremstyle{definition}
\newtheorem{theorem}[figure]{Theorem}
\newtheorem{lemma}[figure]{Lemma}
\newtheorem{corollary}[figure]{Corollary}
\newtheorem{proposition}[figure]{Proposition}
\newtheorem{definition}[figure]{Definition}
\newtheorem{remark}[figure]{Remark}
\newtheorem{example}[figure]{Example}
\title{Skelet \#17 and the fifth Busy Beaver number}
\author{Chris Xu}
\date{\today}
\email{chx007@ucsd.edu}
\begin{document}
\maketitle

\section{Description}

\subsection{Introduction}
The \textit{busy beaver} function $\text{BB}(n)$ is defined to be the maximum number of moves a halting $n$-state $2$-symbol Turing machine makes before it halts. Although there is no general algorithm to compute $\text{BB}(n)$ in general, researchers have long conjectured that $\text{BB}(5) = \text{47,176,870}$, perhaps most recently by Aaronson \cite{bbfrontier}.

A massive undertaking by The Busy Beaver Challenge team \cite{bbchallenge} has narrowed down the determination of $\text{BB}(5)$ to showing non-halting of approximately 30 ``holdout'' machines \cite{bbchallengeholdouts}. Of those machines, there is significant overlap with a different list of 43 holdouts for which Georgi ``Skelet'' Georgiev claimed to find in 2003 \cite{skelethomepage}: the machine dubbed Skelet \#17 \cite{skelet17} is the focus of our paper. 

Although most of the 30 holdouts can be grouped into well-known categories, analysis of Skelet \#17 has eluded all known attempts at classification. Members of \cite{bbchallenge} are in general agreement that Skelet \#17, along with Skelet \#1, comprise the two most difficult holdouts to analyze. In this paper, we show that Skelet \#17 does not halt (\cref{thm:mainthm}). 

\subsection{Skelet \#17}
By \cite{savask}, is known that Skelet \#17 can be described by the following process: begin with the state $S = (0,2,4,0)$. Let $P(S)$ be the next state after $S$. $P(S)$ is defined by the following rules:
\begin{itemize}
    \item \underline{Overflow}: If $S = (2a_\ell+1, 2a_{\ell-1}, \dots, 2a_0)$, transition to $P(S) = (0,2a_\ell+2, 2a_{\ell-1}, \dots, 2a_0)$.
    \item \underline{Halt}: If $S = (0,0,2a_{\ell-2}, \dots, 2a_0)$, halt.
    \item \underline{Zero}: If $S = (2a_\ell, 2a_{\ell-1}, \dots, 2a_0)$ and $(a_1,a_2) \neq (0,0)$, transition to $P(S) = (0,0,2a_\ell+1, 2a_{\ell-1}, \dots, 2a_1, 2a_0-1)$.
    \item \underline{Halve}: If $S = (a_\ell, \dots, a_1, -1)$, transition to $P(S) = (a_\ell, \dots, a_1)$.
    \item \underline{Increment}: If $S = (a_\ell,\dots,a_1,a_0)$ is not in the form specified by any of the above rules, find the rightmost index $a_i$ of $S$ with an odd value. Transition to $P(S) = (a_\ell,a_{i+2},a_{i+1}+1,a_i,\dots,a_1,a_0-1).$
\end{itemize}
\begin{remark}
    Note that the conventions of \cite{savask} are slightly different from ours: the precise correspondence is as follows:
    \begin{itemize}
        \item (E1) and (E2) in \cite{savask} correspond to \underline{Zero} here.
        \item (E3) in \cite{savask} corresponds to \underline{Halt} here.
        \item (O1) in \cite{savask} corresponds to \underline{Zero} $\circ$ \underline{Overflow} here.
        \item (O2), (O3), (O4) and (O5) in \cite{savask} correspond to \underline{Increment} here.
        \item (O6) in \cite{savask} corresponds to \underline{Halve} $\circ$ \underline{Increment} here.
    \end{itemize}
\end{remark}
\begin{definition}
    Given two states $S$ and $T$, say that $S \mapsto T$ if $P^k(S)=T$ for some $k \in \mathbb{N}$. In this case, define $S \to T$ to be the sequence of transition rules that were applied from $S$ to obtain $T$.
\end{definition}

The goal of this paper is to show that the above process never terminates. In particular, we will establish the following theorem:
\begin{theorem} \label{thm:mainthm}
    We have $(0,2,4,\dots,2^{2k},0) \mapsto (0,2,4,\dots,2^{2k+2},0)$ for all $k \in \mathbb{N}$. Moreover, during this, we use exactly one \underline{Overflow} rule. As a result, Skelet \#17 never halts.
\end{theorem}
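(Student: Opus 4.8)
The plan is to reduce the theorem to the displayed claim and then prove that claim uniformly in $k$. The reduction is immediate: the start state $(0,2,4,0)$ is the $k=1$ member of the family $(0,2,4,\dots,2^{2k},0)$, every member of that family has second coordinate $2\neq 0$ and hence is never of the shape $(0,0,2a_{\ell-2},\dots,2a_0)$ demanded by \underline{Halt}, and by definition $S\mapsto T$ holds only along a trajectory on which $P$ is everywhere defined, i.e.\ which never halts. So, granting $(0,2,4,\dots,2^{2k},0)\mapsto(0,2,4,\dots,2^{2k+2},0)$ for every $k\geq 1$, the process started at $(0,2,4,0)$ passes through each member of the family and never halts; the ``induction on $k$'' is then just the iteration of this one transition.

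For the displayed claim I would set up a \emph{macro-dynamics}. Introduce a family of ``landmark'' configurations, each consisting of a frozen prefix $(0,2,4,\dots,2^{2k})$ — or a length-shifted variant of it — followed by a short suffix encoding a tuple of nonnegative counters, and prove a transition lemma: from any landmark the process runs deterministically through a bounded block of micro-steps (dominated by \underline{Increment} and \underline{Halve}, with the occasional \underline{Zero}) to another landmark whose counters are an explicit, piecewise, Collatz-flavored function $\Phi$ of the old ones. The structural phenomenon to isolate is already visible in the $k=1$ run: in the block
\[
(0,0,1,2,4)\to(0,1,1,2,3)\to(0,1,1,3,2)\to(0,1,2,3,1)\to(0,1,2,4,0)\to(1,1,2,4,-1),
\]
a single cascade of \underline{Increment}s slides a block of consecutive powers of $2$ up by one slot while depositing a fresh odd ``marker'' on top and a trailing $-1$ (immediately cleared by \underline{Halve}). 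Iterating this suitably is what lengthens the prefix from $2^{2k}$ to $2^{2k+2}$; the design problem is to choose the suffix encoding so that both $(0,2,\dots,2^{2k},0)$ and $(0,2,\dots,2^{2k+2},0)$ are landmarks and so that $\Phi$ has a transparent orbit.

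Granting the transition lemma, the theorem follows by iterating $\Phi$ from the initial counters, checking that the orbit reaches the final landmark after the predicted number of macro-steps, and observing that \underline{Overflow} fires inside exactly one of those macro-blocks — the one in which the leading entry first becomes odd \emph{while every lower entry is even} — which yields the ``exactly one \underline{Overflow}'' refinement. I expect the genuine difficulty to lie in discovering and verifying this macro-structure: pinning down the landmark family and the map $\Phi$ requires following a long and intricate micro-trajectory (even the $k=1$ cycle is long), after which the verification is a finite but tedious case analysis over the five rules, whose delicate points are the interaction between a trailing $-1$ and \underline{Halve} and the rule changes triggered when a counter hits $0$. A secondary, possibly genuinely hard, obstacle is making the termination and exact step-count of each \emph{inner} loop of repeated blocks rigorous: a naive induction may fail to close, and one may have to supply a monovariant — plausibly a weighted digit-sum in a base adapted to the geometric growth of the prefix — certifying that each inner loop halts with exactly the claimed output.
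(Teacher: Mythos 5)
Your reduction to the iterated claim is fine, and your instinct that the right framework is a macro-dynamics of ``landmark'' configurations connected by deterministic blocks of micro-steps is indeed the shape of the paper's argument. But the proposal stops exactly where the paper's work begins. You never identify what the landmarks are, what the counters are, or what $\Phi$ does to them; you explicitly defer ``pinning down the landmark family and the map $\Phi$'' and acknowledge that this is where the genuine difficulty lies. That difficulty is the entire content of the theorem, so as written this is a strategy outline, not a proof.

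Concretely, what is missing is (i) the Gray-code re-encoding of a state into the three coordinates $(n,\ell,\sigma)$, under which \underline{Increment} becomes $n \mapsto n \pm 1$ and \underline{Halve} becomes $n \mapsto \lfloor n/2\rfloor$; (ii) the identification of the landmarks as \emph{empty} states ($n=0$, $\sigma=-1$) and the crucial restriction to \emph{embanked} ones, for which the block to the next landmark is exactly Zero, two Halves, and Increments; (iii) the two-component invariant $h(E)=(h_1,h_2)$ recording $n$ right after each Halve, whose update rule (\cref{lemma:change}) and whose control of the \emph{next} incremented index via $\nu_2(h_1)$ or $\nu_2(h_2+1)+1$ (\cref{prop:nextindex}, \cref{cor:nextindex}) is what makes the macro-map $\Phi$ transparent; (iv) the inductive bookkeeping of \cref{prop:regular}, which walks $h$ from $(2^{2k}-3,2^{2k}+2)$ out to $(1,2^{2k+1}-2)$ while tracking $a_0, a_1$; and (v) the explicit endgame, where $E''$ is computed coordinate-by-coordinate by counting $\kappa$'s, the \underline{Overflow} is triggered once, and the resulting state is massaged (via two more $N'$-style steps) into $S_{k+1}$. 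Your ``monovariant'' worry in the last paragraph is actually dissolved by (iii): the update rule for $h$ is so simple that no separate termination certificate is needed. And your heuristic for when \underline{Overflow} fires (``leading entry first becomes odd while every lower entry is even'') is only a restatement of the rule's trigger condition, not an argument that it happens exactly once; that uniqueness comes from the bounds in \cref{prop:weaklyembankedbounds} being respected throughout \cref{prop:regular} and violated only at $E''$. Without (i)--(v), the proposal does not establish the theorem.
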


\subsection{Formalization}
The author originally wrote up a sketch of this paper in 2023. Recently, ``Mxdys'' has adapted its contents to a formal proof of Skelet \#17's nonhalting \cite{formalproof17}, along with formalizing everything else in the $\text{BB}(5)$ program \cite{formalproof}. As a result, we are able to conclude:
\begin{theorem}
    We have $\text{BB}(5) = 47,176,870$.
\end{theorem}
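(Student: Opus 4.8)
The plan is to combine \cref{thm:mainthm} with the collective output of the Busy Beaver Challenge \cite{bbchallenge}. The lower bound $\text{BB}(5) \ge 47,176,870$ is the easy half: one exhibits the explicit record-holding machine and simulates it for $47,176,870$ steps, verifying by a finite computation that it halts at exactly that step. Everything substantive lies in the matching upper bound, namely that \emph{every} halting $5$-state $2$-symbol machine halts within $47,176,870$ steps --- equivalently, that every machine still running after step $47,176,870$ loops forever.

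For the upper bound I would first reduce to a finite problem: up to the standard symmetries (tape reflection, relabeling of the non-initial states, and fixing the normal form of the first transition) there are only finitely many machines, and these are produced by a normalized tree enumeration. Each machine is then fed through a pipeline of \emph{deciders} --- cyclers and translated cyclers, bouncers, the closed-tape-language and $n$-gram closed-position-set methods, finite-automata-reduction certificates, and so on --- where a successful run of a decider emits a checkable certificate of non-halting. This pipeline clears every machine except a list of roughly $30$ holdouts \cite{bbchallengeholdouts}. Next I would cite the bespoke non-halting arguments for those holdouts; all but one fit into a handful of recognizable families, and the lone exception is Skelet \#17. \cref{thm:mainthm} resolves it: since the rewriting process of \cref{thm:mainthm} faithfully models the machine's computation (via the reduction in \cite{savask}) and never reaches the \underline{Halt} rule, the machine never halts. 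Chaining these facts yields $\text{BB}(5) \le 47,176,870$, and with the lower bound, equality.

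The main obstacle is not a single deep argument but \emph{trustworthy verification at scale}: the enumeration, the soundness of every decider, the correctness of each certificate format, and the ad hoc holdout proofs (this paper's included) must all hold simultaneously, and a flaw anywhere sinks the entire conclusion. That is why the decisive step is to invoke the Lean formalization of ``Mxdys'' \cite{formalproof,formalproof17}, which mechanically checks the enumeration, each decider's soundness, and the Skelet \#17 non-halting argument against a small trusted kernel. Once that development type-checks, the only task remaining --- and the real content of the proof of the final statement --- is bookkeeping: confirming that the formalized theorem is literally ``$\text{BB}(5) = 47,176,870$'' under the intended definition of $\text{BB}$, rather than any new mathematics.
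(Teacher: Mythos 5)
Your proposal matches the paper's approach: the paper states this theorem as a direct consequence of \cref{thm:mainthm} together with the Lean formalization by Mxdys \cite{formalproof17,formalproof} of the full $\text{BB}(5)$ program (enumeration, deciders, and holdout proofs), and offers no further argument. You have simply unpacked what that formalization certifies, so the content is the same, only more expository.
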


\section{Basics}

\subsection{State variables}
For $r \in \mathbb{R}$, let $\left<r\right>$ denote the nearest integer to $r$, where we round up if $r$ is a half-integer (e.g $\left<1.5\right> = 2$). 
\begin{definition} \label{def:graycode}
    For $n \in \mathbb{N}$, the \textit{Gray code} of $n$ is defined to be $\text{GrayCode}(n) := \cdots a_3 a_2 a_1 a_0$, where each digit $a_i \in \{0,1\}$ equals the mod $2$ reduction of $\left<n/2^i\right>$.
\end{definition}
For a state $S = (a_\ell,\dots, a_0)$, introduce the following notation, which is specified in order to capture the most important properties of Skelet \#17:
\begin{itemize}
    \item Let $n := n(S)$ denote the number $\text{GrayCode}^{-1}(\overline{a_\ell}\overline{a_{\ell-1}}\dots \overline{a_1})$. Here we let $\overline{a_i} := a_i \bmod 2$, and note that we are not considering $a_0$ in the formation of $n$.
    \item Let $\ell := \ell(S)$ denote the number one less than the size of $S$, i.e. $|S|-1$.
    \item Let $\sigma := \sigma(S) \in \{-1, +1\}$ be $+1$ if $\sum_{i=0}^\ell a_i$ is odd, and $-1$ if $\sum_{i=0}^\ell a_i$ is even.
    \item In addition to the three state variables above, let $a_i := a_i(S)$ denote the value $a_i$ of the $i^{th}$ index of $S$, for $i \in [0,\ell].$
\end{itemize}
Under the transition rules, we may write down how the first three state variables change for each rule that is applied. In particular, one can verify the following table (where the substitutions in a given rule's row are performed going left to right):
\begin{center}
\begin{tabular}{ c|c|c|c}
    Rule & $n$ & $\ell$ & $\sigma$ \\
    \hline 
    \underline{Overflow} & $2^\ell-1 \mapsto 0$ & $\ell \mapsto \ell+1$ & $+1 \mapsto -1$\\
    \underline{Empty} & $0 \mapsto 2^\ell-1$ & $\ell \mapsto \ell+2$ & $-1 \to -1$ \\
    \underline{Halve} & $n \mapsto \floor{n/2}$ & $\ell \mapsto \ell-1$ & $\sigma \mapsto -\sigma$ \\
    \underline{Increment} & $n \mapsto n + \sigma$ & $\ell \mapsto \ell$ & $\sigma \mapsto \sigma$ \\
    \underline{Halt} & $0 \mapsto \text{N/A}$ & $\ell \mapsto \text{N/A}$ & $-1 \mapsto \text{N/A}$
\end{tabular}
\end{center}
The above table does more than just specify how the state variables change with each rule: it also restricts what $(n,\ell,\sigma)$ can be immediately before a given rule is applied. For instance, if $S$ is a state for which we \underline{Overflow} out of, then we must necessarily have $n(S) = 2^{\ell(S)}-1$ and $\sigma(S) = +1$.

\subsection{Increments}
Define the function
\begin{align*}
    d_j(a,b) &:= \left|\left<\frac{a}{2^j}\right> - \left<\frac{b}{2^j}\right>\right|.
\end{align*}
The next proposition states how a state's coordinates change under a series of increments:
\begin{proposition} \label{prop:incr}
    Suppose we have states $S$ and $S'$ where $S \mapsto S'$, such that $S \to S'$ consists of rules of the form Increment. Denote $n := n(S)$, $n' := n(S')$ and $\sigma := \sigma(S) = \sigma(S')$. Then for each $i \in [0,\ell]$ we have 
    \begin{align*}
        a_i(S') &= \begin{cases}
            a_i(S) + d_i(n,n') & i \geq 1 \\
            a_i(S) - d_i(n,n') & i = 0.
        \end{cases}
    \end{align*}
\end{proposition}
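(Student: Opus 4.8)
The plan is to induct on the number $k$ of \underline{Increment} rules comprising $S \to S'$. The base case $k = 0$ is immediate: then $S = S'$, so $n = n'$ and $d_i(n, n) = 0$.

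For the inductive step, write the sequence as $S \mapsto S' \mapsto S''$, where $S \to S'$ consists of $k$ \underline{Increment} rules and $S' \to S''$ is a single \underline{Increment}. From the transition table each \underline{Increment} sends $n \mapsto n + \sigma$ and fixes $\sigma$ and $\ell$, so $n' := n(S') = n + k\sigma$, $n'' := n(S'') = n + (k+1)\sigma$, and the three states share the same $\sigma$. Since each \underline{Increment} decrements coordinate $0$, we get $a_0(S'') = a_0(S) - (k+1) = a_0(S) - d_0(n, n'')$, which handles $i = 0$. For $i \geq 1$, let $c$ be the coordinate incremented by $S' \to S''$; by the \underline{Increment} rule $c = p + 1$, where $a_p(S')$ is the rightmost odd coordinate of $S'$ (such a $p$ exists and satisfies $p \leq \ell - 1$, since $S'$ is an \underline{Increment} state: $a_\ell(S')$ is even while some coordinate is odd). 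Then $a_i(S'') = a_i(S') + [i = c]$, and the inductive hypothesis gives $a_i(S') = a_i(S) + d_i(n, n')$, so it remains to prove $d_i(n, n'') = d_i(n, n') + [i = c]$ for all $i \geq 1$.

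This splits into two parts. First, $n$, $n'$, $n''$ form an arithmetic progression of common difference $\sigma$ with $k \geq 0$, so $n'$ lies between $n$ and $n''$; as $x \mapsto \left<x/2^i\right>$ is monotone, $\left<n'/2^i\right>$ lies between $\left<n/2^i\right>$ and $\left<n''/2^i\right>$, whence $d_i(n, n'') = d_i(n, n') + d_i(n', n'')$. It thus suffices to show $d_i(n', n'') = [i = c]$. Second --- and this is where the real content lies --- I would pin down both sides via the $2$-adic valuation $\nu_2$. On one hand, since $|n'' - n'| = 1$, writing $\left<x/2^i\right> = \floor{(x + 2^{i-1})/2^i}$ and analyzing when this floor jumps shows $d_i(n', n'') = 1$ exactly when $\nu_2(\max(n', n'')) = i - 1$ (and $\max(n', n'') \geq 1$, so $\nu_2$ is finite). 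On the other hand, the \underline{Increment} $S' \to S''$ changes the coordinates $a_1, \dots, a_\ell$ of $S'$ in exactly one place, flipping the parity of $a_c$; hence $\text{GrayCode}(n')$ and $\text{GrayCode}(n'')$ differ precisely in the bit at position $c - 1$, and together with $|n'' - n'| = 1$ and the standard identity $\text{GrayCode}(m) \oplus \text{GrayCode}(m+1) = 2^{\nu_2(m+1)}$ this forces $\nu_2(\max(n', n'')) = c - 1$. Comparing the two descriptions gives $d_i(n', n'') = [\,i - 1 = c - 1\,] = [i = c]$, completing the induction.

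The main obstacle is the identity $d_i(n', n'') = [i = c]$: it is the bridge between the combinatorial description of \underline{Increment} (find the rightmost odd coordinate, bump the coordinate above it) and the arithmetic of $d_i$ together with the Gray code. The floor computation needs a little care because of the ``round half up'' convention in the definition of $\left<\cdot\right>$, and one must confirm that the single Gray-code digit changed by an \underline{Increment} is compatible with the unique bit in which $\text{GrayCode}(m)$ and $\text{GrayCode}(m \pm 1)$ can differ. A smaller point to verify along the way is that the preconditions of \underline{Increment} (notably $a_\ell$ even and $a_0 \neq -1$) persist throughout the sequence, so that the rightmost odd coordinate is always defined and lies at an index at most $\ell - 1$; this also rules out $n' = 0$ when $\sigma = -1$, so that $\max(n', n'')$ is genuinely positive.
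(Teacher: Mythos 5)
Your proof is correct and expands the paper's one-line argument into a fully rigorous induction. The underlying idea is the same as the paper's (count bit flips at position $i-1$ of the Gray code, which \cref{def:graycode} ties to $d_i$), but you make explicit three things the paper leaves implicit: (a) the additive split $d_i(n,n'') = d_i(n,n') + d_i(n',n'')$ via monotonicity of $r \mapsto \left<r/2^i\right>$; (b) the characterization $d_i(n',n'')=1 \iff \nu_2(\max(n',n''))=i-1$ via the floor representation of rounding; and (c) matching that valuation to the unique bumped coordinate $c$ using the Gray-code single-bit-flip identity. The paper instead simply asserts that $d_i$ "quantifies how much $a_i$ increases when incrementing the Gray code,'' so your version is strictly more detailed. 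Two small remarks: your parenthetical claim that $a_\ell(S')$ must be even is not quite right --- $a_\ell$ can be odd in an \underline{Increment} state as long as some lower coordinate is also odd; the correct observation is simply that the rightmost odd index cannot be $\ell$ (else \underline{Overflow} would apply), so $c \le \ell$ still holds. Also, your step (c) relies on $\text{GrayCode}(m) = m \oplus (m\mathbin{>>}1)$, which is consistent with the paper's worked example and with the use of $d_i$ in the proposition, but note that \cref{def:graycode} as literally written ($\left<n/2^i\right>\bmod 2$ rather than $\left<n/2^{i+1}\right>\bmod 2$) is off by one index from this standard definition; you correctly read it as intended.
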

\begin{proof}
    For $i = 0$, we have $d_i(n,n') = |n'-n|$, which is precisely the number of \underline{Increment} rules in $S \to S'$. Since $a_0$ decreases after an \underline{Increment} rule, the claim $d_0(n,n') = a_0(S) - a_0(S')$ follows immediately. For $i > 0$, note that by \cref{def:graycode}, the value $d_i(n,n')$ quantifies how much $a_i$ increases when incrementing the Gray code of $n$ to the Gray code of $n'$.
\end{proof}

\section{The speedup}
\subsection{Conventions}
From this section onwards, fix $k \in \mathbb{N}$ and let $S_k := (0,2,2^2,\dots,2^{2k},0)$. From now on, every state $E$ we will consider will satisfy $S_k \mapsto E$, but not $S_{k+1} \mapsto E$. For $E = (a_\ell, \dots, a_1, a_0)$ and $i \in [0,\ell]$, denote $E[i] := (a_\ell, \dots, a_{i+1}, a_i+2, a_{i-1}, \dots, a_0)$, which is $E$ but with $a_i$ incremented by $+2$.

\subsection{Empty and embanked states}
\begin{definition}
    Call a state $E$ \textit{empty} if $n(E) = 0$ and $\sigma(E) = -1$, but the next rule applied is not \underline{Halt} (so in particular the state immediately after $E$ is achieved by applying \underline{Zero}).
\end{definition}
If $E$ is empty, then let $N(E) := T_E(E)$ denote the next state after $E$ that is empty. Let $T_E$ denote the sequence of rules $E \to N(E)$ if $N(E)$ exists, and otherwise let $T_E$ denote the sequence of rules after $E$.

\begin{definition} 
    Let $E$ be an empty state.
    \begin{enumerate}
        \item Say that $E$ is \textit{embanked} if the non-\underline{Increment} rules of $T_E$ consist of exactly one \underline{Zero} rule at the start and exactly two \underline{Halve} rules elsewhere.
        \item Say that $E$ is \textit{weakly embanked} if the rules of $T_E$ consist of one \underline{Zero} rule at the start and at least two \underline{Halve} rules (in particular, there may be arbitrarily many), such that all other rules before the second \underline{Halve} rule are \underline{Increment} rules.
        \item Suppose $E$ is weakly embanked. For $i \in \{1,2\}$, define $h_i := h_i(E)$ (resp. $s_i := s_i(E)$) to be the value of $n$ for the state immediately after (resp. before) the $i^{th}$ \underline{Halve} rule is applied in $T_E$. Let $h := h(E)$ (resp. $s := s(E)$) denote the tuple $(h_1(E), h_2(E))$ (resp. $(s_1(E), s_2(E))$). So in particular we have $h_i = \lfloor s_i/2 \rfloor$.
    \end{enumerate}



    
\end{definition}


In the course of proving \cref{thm:mainthm}, we will in fact see that the vast majority of the empty states we consider are embanked. In the first non-embanked empty state we see, an \underline{Overflow} rule results, at which point an explicit analysis will yield the desired result. It is in this fact that the reason for the term \textit{embanked} becomes clear: it signifies that $T_E$ flows extremely well as we repeatedly apply $N(\cdot)$, and in particular well enough for us to apply major speedups to the Skelet \#17 process.
\begin{example}
    Let $E := (2,2,6,8,18,0)$. Then $E$ is embanked with $h(E) = (15, 17)$. Let us spell out $T_E$ in detail: $n$ will start at $31$ after the first \underline{Empty} rule, with corresponding state $(0,0,3,2,6,8,18,-1)$. Here, we have $a_0 = -1$, so we must immediately apply the first \underline{Halve} rule, which takes $n$ to $\lfloor 31/2 \rfloor = 15$ and $\sigma$ to $+1$, with ensuing state $(0,0,3,2,6,8,18)$. A series of \underline{Increment} rules are applied until $n=34$, where the state is now $(1,1,4,4,11,17,-1)$. Then the second \underline{Halve} rule is applied to yield $(n,\sigma) = (17,-1)$, and after a further series of \underline{Increment} rules we have $n = 0$, which corresponds to the state $E' = (2,2,6,8,20,0) = E[1]$. The reader is highly encouraged to graph the values of $n$ attained in $T_E$ to gain an intuition behind this paper.
\end{example}

\begin{proposition} \label{prop:weaklyembankedbounds}
    Empty state $E$ is weakly embanked if and only if the following conditions hold:
    \begin{enumerate}
        \item $a_0(E) < 2^{2k+1}-1$.
        \item $a_1(E) < 3 \cdot 2^{2k} - 1$.
    \end{enumerate}
\end{proposition}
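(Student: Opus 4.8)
The plan is to trace $T_E$ rule by rule, tracking $(n,\ell,\sigma)$ together with the coordinates $a_0$ and $a_1$, and to read off exactly when the first and second \underline{Halve} rules can occur. Write $L:=\ell(E)$; for the empty states under consideration $L=2k+1$, so conditions (1) and (2) are $a_0(E)<2^{L}-1$ and $a_1(E)<3\cdot 2^{L-1}-1$. Since $n(E)=0$ and $\sigma(E)=-1$, every coordinate of $E$ is even; in particular $a_0(E)$ and $a_1(E)$ are even, which we use freely below (it is what makes the strict inequalities of the statement match the break-even values that arise).

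First I would analyze $T_E$ from $E$ up to the first \underline{Halve}. Applying \underline{Zero} produces a state with $n=2^{L}-1$, $\sigma=-1$, length index $L+2$, and coordinates $a_0=a_0(E)-1$, $a_1=a_1(E)$. Since $\sigma=-1$, the \underline{Increment}s that follow strictly decrease $n$ and, by \cref{prop:incr}, decrease $a_0$ by $1$ at each step; and no other rule can break in, because $\sigma=-1$ excludes \underline{Overflow}, $n$ stays positive so \underline{Zero} and \underline{Halt} cannot apply, and the trigger of \underline{Halve} is literally $a_0=-1$. Hence the first \underline{Halve} is forced exactly when $n$ has decreased to $s_1:=2^{L}-1-a_0(E)$; this happens --- as opposed to $n$ reaching $0$ with no \underline{Halve} at all, which would make $E$ not weakly embanked --- precisely when $s_1\ge 1$, i.e. precisely under condition (1).

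Next I would continue from just after the first \underline{Halve} to the second. That \underline{Halve} sends $n$ to $h_1=\lfloor s_1/2\rfloor=2^{L-1}-1-a_0(E)/2$ (using that $s_1$ is odd), flips $\sigma$ to $+1$, lowers the length index to $L+1$, and shifts all indices down by one, so the new $a_0$ equals the old $a_1$; by \cref{prop:incr} and the identity $d_1(2^{L}-1,\,s_1)=a_0(E)/2$ this equals $a_1(E)+a_0(E)/2$. Now $\sigma=+1$, so the \underline{Increment}s that follow raise $n$ and lower $a_0$ in lockstep, keeping $n+a_0$ constant and equal to $2^{L-1}-1+a_1(E)$. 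The only ways this run can end are at $a_0=-1$ (the second \underline{Halve}) or at an \underline{Overflow}, which needs $n=2^{L+1}-1$ with $a_0\ge 0$; by the invariant, at $n=2^{L+1}-1$ we would have $a_0=a_1(E)-3\cdot 2^{L-1}$. So the second \underline{Halve} occurs, preceded only by \underline{Increment}s, exactly when $a_1(E)-3\cdot 2^{L-1}<0$, i.e. under condition (2); otherwise an \underline{Overflow} fires first, $T_E$ contains only one \underline{Halve}, and $E$ is not weakly embanked. Combining the two runs with the definition of weakly embanked yields the stated equivalence.

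The only step requiring genuine care --- everything else being sign-chasing plus the $n+a_0$ lockstep invariant --- is the coordinate bookkeeping through the \underline{Zero} and \underline{Halve} rules, and specifically the rounding identity $d_1\bigl(2^{L}-1,\,2^{L}-1-a_0(E)\bigr)=a_0(E)/2$; this comes down to $\left<\tfrac{2^{L}-1}{2}\right>=2^{L-1}$ together with the oddness of $2^{L}-1-a_0(E)$, so that halving it rounds up to $2^{L-1}-a_0(E)/2$. I would also check, on each \underline{Increment} run, that no rule other than \underline{Increment} can slip in ahead of the relevant \underline{Halve}, which is routine given the fixed sign of $\sigma$ on the run, the range of $n$, and that $a_0=-1$ is precisely the \underline{Halve} condition. (The degenerate case $a_0(E)=0$, where the first \underline{Halve} is immediate with no \underline{Increment}s, is covered by the same formulas.)
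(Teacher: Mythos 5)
Your proof is correct and takes essentially the same route as the paper's: trace $T_E$ through the initial \underline{Zero} and the two \underline{Halve} rules, comparing when $a_0$ reaches $-1$ against when $n$ reaches its critical endpoint ($0$ on the first run, $2^{2k+2}-1$ on the second), and use the evenness of $a_0(E),a_1(E)$ together with the rounding identity for $d_1$ to convert the break-even thresholds into the stated strict inequalities. The paper phrases the second run directly as an inequality $a_0(E')+1 < 2^{2k+2}-1-n(E')$ rather than via your $n+a_0$-invariant, and is somewhat terser about ruling out intervening rules, but the substance is identical.
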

\begin{proof}
    For $E$ to be weakly embanked, the sequence $T_{\underline{\text{Zero}}(E)}$ must have two \underline{Halve} rules before any non-\underline{Increment} rule. To guarantee that the first \underline{Halve} rule occurs before any non-\underline{Increment} rule, the variable $a_0$ must decrement to $-1$ from $a_0(\underline{\text{Zero}}(E)) = a_0(E) - 1$ before the variable $n$ decrements to $0$ from $n(\underline{\text{Zero}}(E)) = 2^{2k+1}-1$. This is the same as the first condition.

    Assume condition (1) holds, and let $E'$ be the state immediately after applying the first \underline{Halve} rule in $T_E$. To guarantee that the second \underline{Halve} rule occurs before any non-\underline{Increment} rule, the variable $a_0$ must decrement from $a_0(E')$ to $-1$ before the variable $n$ increments from $n(E')$ to $2^{2k+2}-1$ (after which an \underline{Overflow} rule would have to be applied). We readily compute
    \begin{align*}
        a_0(E') &= a_1(E) + d_1(2^{2k+1}-1, 2^{2k+1}-1-a_0) \\
        n(E') &= \left\lfloor \frac{2^{2k+1}-1-a_0(E)}{2} \right\rfloor
    \end{align*}
    so that the desired condition becomes the inequality 
    \begin{align*}
        a_1(E) + d_1(2^{2k+1}-1, 2^{2k+1}-1-a_0(E)) + 1 < 2^{2k+2}-1 - \left\lfloor \frac{2^{2k+1}-1-a_0(E)}{2} \right\rfloor.
    \end{align*}
    Note that $a_0(E)$ is necessarily even (since $E$ is empty), so the above inequality becomes
    \begin{align*}
        a_1(E) + \frac{a_0(E)}{2} + 1 &< 2^{2k+2}-1-\left(2^{2k} - \frac{a_0(E)+2}{2}\right),
    \end{align*}
    which further simplifies to $a_1(E) < 3\cdot 2^{2k} - 1.$ This is exactly condition (2), so we win.
\end{proof}

\subsection{Speedup}
\begin{lemma} \label{lemma:change}
    Let $E = (a_\ell, \dots, a_1, a_0)$ be an embanked state with $h(E) =: (h_1, h_2)$, $s(E) =: (s_1, s_2)$, and choose $i \in [0,\ell]$ such that $E[i]$ is weakly embanked. Then we have
    \begin{align*}
        h(E[i]) &= \begin{cases}
            (h_1-1, h_2) & i = 0 \\ 
            (h_1, h_2+1) & i = 1 \\
            (h_1, h_2) & i \geq 2.
        \end{cases} \\
        s(E[i]) &= \begin{cases}
            (s_1-2, s_2) & i = 0 \\ 
            (s_1, s_2+2) & i = 1 \\
            (s_1, s_2) & i \geq 2.
        \end{cases}
    \end{align*}
\end{lemma}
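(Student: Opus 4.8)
The plan is to derive closed forms for $s(E)$ and $h(E)$ that depend on the empty state $E$ only through $\ell := \ell(E)$, $a_0(E)$, and $a_1(E)$, and then to read off the three cases by substituting $a_i \mapsto a_i + 2$ into them (recalling that $E[i]$ has the same length as $E$ and agrees with it in every coordinate but the $i$-th). Two preliminary remarks set this up. First, an embanked state is in particular weakly embanked: the opening \underline{Zero} and the two \underline{Halve} rules are the \emph{only} non-\underline{Increment} rules of $T_E$, so a fortiori every rule before the second \underline{Halve} other than that \underline{Zero} is an \underline{Increment}. Hence both $T_E$ and $T_{E[i]}$ have the shape ``\underline{Zero}; \underline{Increment}s; \underline{Halve}; \underline{Increment}s; \underline{Halve}'', and \cref{prop:incr} may be invoked on each maximal run of \underline{Increment} rules.

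First I would trace $T_E$. Applying \underline{Zero} to $E = (2b_\ell,\dots,2b_0)$ yields $(0,0,2b_\ell+1,\dots,2b_1,2b_0-1)$; since $E$ is empty, $a_1,\dots,a_\ell$ are all even, and a short Gray-code computation identifies this state as having $n = 2^\ell - 1$, with $a_0 = a_0(E)-1$ and $a_1 = a_1(E)$ (the lone exception being $\ell = 1$, where instead $a_1 = a_1(E)+1$; this difference is $\ell$-only and so cancels in every comparison below). In the first \underline{Increment} run $\sigma = -1$, so each step lowers both $n$ and $a_0$ by $1$; there are therefore exactly $a_0(E)$ of them, giving $s_1(E) = 2^\ell - 1 - a_0(E)$, and \cref{prop:incr} shows $a_1$ has risen to $a_1(E) + d_1(2^\ell - 1, s_1(E))$. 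The first \underline{Halve} deletes the last coordinate and flips $\sigma$ to $+1$, so the new $a_0$ equals that value; the second \underline{Increment} run then lasts exactly one step beyond it, whence $s_2(E) = h_1(E) + a_1(E) + d_1(2^\ell - 1, s_1(E)) + 1$, with $h_i(E) = \lfloor s_i(E)/2 \rfloor$. The one genuinely substantive point is the evaluation of the $d_1$ term: $a_0(E)$ is even, so $s_1(E)$ is odd and $\left<s_1(E)/2\right> = h_1(E) + 1$; combining this with $\left<(2^\ell-1)/2\right> = 2^{\ell-1}$ and $h_1(E) \le 2^{\ell-1} - 1$ gives $d_1(2^\ell - 1, s_1(E)) = 2^{\ell-1} - h_1(E) - 1$, so the formula for $s_2$ collapses to $s_2(E) = a_1(E) + 2^{\ell-1}$ — in particular, $s_2(E)$ is independent of $a_0(E)$.

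With $s_1(E) = 2^\ell - 1 - a_0(E)$ and $s_2(E) = a_1(E) + 2^{\ell-1}$ established, the lemma is immediate: replacing $a_0$ by $a_0 + 2$ drops $s_1$ by $2$ and fixes $s_2$; replacing $a_1$ by $a_1 + 2$ fixes $s_1$ and raises $s_2$ by $2$; replacing $a_j$ with $j \ge 2$ changes neither ($\ell$, $a_0$, $a_1$ all untouched); and since each $s_i(E[i])$ differs from $s_i(E)$ by an even amount, $h_i(E[i]) = \lfloor s_i(E[i])/2 \rfloor$ changes by exactly half that amount, yielding the stated values. I expect the main obstacle to be precisely the simplification $d_1(2^\ell - 1, s_1(E)) = 2^{\ell-1} - h_1(E) - 1$ — equivalently, the fact that the two extra \underline{Increment}s forced by $a_0 \mapsto a_0 + 2$ push $a_1$ up by exactly one, which is what cancels the drop in $h_1$ when $s_2$ is assembled — together with the routine-but-necessary checks that weak embankedness really does license \cref{prop:incr} on each \underline{Increment} run, and that in the degenerate case $\ell = 1$ only $i = 1$ can occur (so the slightly different constant in $s_2$ there is harmless).
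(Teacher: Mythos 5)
Your proof is correct and sharpens the paper's argument. The paper's own proof asserts that ``$a_0$ influences $h_1$ while $a_1$ influences $h_2$'' and deduces the three cases by counting extra \underline{Increment} rules, but it does not actually explain why bumping $a_0$ leaves $h_2$ untouched; naively, two extra increments before the first \underline{Halve} change both $h_1$ (via $s_1$) \emph{and} the value $a_1$ has accumulated by the time it becomes the new $a_0$, which in turn feeds into $s_2$. You isolate exactly this cancellation by deriving the closed forms $s_1(E) = 2^\ell - 1 - a_0(E)$ and $s_2(E) = a_1(E) + 2^{\ell-1}$: the evaluation $d_1(2^\ell-1,s_1) = 2^{\ell-1} - h_1 - 1$ (valid because $a_0(E)$ is even, hence $s_1$ odd) makes the $h_1$-dependence drop out of $s_2$ explicitly, which is precisely the step the paper leaves implicit. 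Once those two formulas are in hand, all six entries of the lemma follow by substituting $a_i \mapsto a_i + 2$, and the $h$-claims come for free since the $s$-changes are even. The one quibble is your aside about $\ell = 1$: that case never arises, because the \underline{Zero} rule requires $a_2$ to exist (so $\ell \geq 2$ for any empty state to which it applies), and moreover your parenthetical ``only $i=1$ can occur'' is not actually what would hold there — but since the case is vacuous this costs nothing.
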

\begin{proof}
    Note that $h(E)$ depends on only the indices $a_1$ and $a_0$ of $E$ (this immediately implies $h(E[i]) = h(E)$ for $i \geq 2$). In particular, $a_0$ influences $h_1$ while $a_1$ influences $h_2$. Thus, if $i = 0$, then, before the first \underline{Halve} rule of $T_{E[i]}$, we must apply the \underline{Increment} rule two additional times compared to $T_E$ before the necessary $a_0 = -1$ occurs; it follows that $h(E[0]) = (h_1-1,h_2)$. A similar analysis for the states preceding the second \underline{Halve} rule of $T_{E[i]}$ yields $h(E[1]) = (h_1, h_2+1).$
\end{proof}


\begin{proposition} \label{prop:nextindex}
    Suppose $E$ is an embanked state such that $N(E) = E[i]$ for some $i \in [0,\ell(E)]$, and suppose $N(E)$ is weakly embanked. Let $\ell = \ell(E)$, $h(E) = (h_1, h_2)$ and $s(E) = (s_1, s_2)$. Then $N(E)$ is in fact embanked, with
    \begin{align*}
        N(N(E)) &= \begin{cases}
            N(E)[\nu_2(h_1)] & i = 0 \\
            N(E)[\nu_2(h_2+1)+1] & i = 1 \\
            N(E)[i- 2] & i \geq 2.
        \end{cases}
    \end{align*}
\end{proposition}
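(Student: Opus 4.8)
The plan is to trace the computation $T_E$ in phases and follow how the perturbation from $E$ to $E[i]$ propagates; write $\mathbf{e}_j$ for the perturbation adding $1$ at index $j$, so that $E[i]=E+2\mathbf{e}_i$. For embanked $E$ with $\ell:=\ell(E)$, the rules of $T_E$ are forced to be: one \underline{Zero} (sending $n\colon 0\to 2^{\ell}-1$, $\ell\to\ell+2$, $\sigma\to -1$); a maximal run of \underline{Increment}s with $n$ descending to $s_1$; the first \underline{Halve} ($n\to h_1=\floor{s_1/2}$, $\sigma\to+1$); \underline{Increment}s with $n$ ascending to $s_2$; the second \underline{Halve} ($n\to h_2=\floor{s_2/2}$, $\sigma\to-1$); \underline{Increment}s with $n$ descending to $0$, arriving at $N(E)$. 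The run boundaries are forced, since a run of \underline{Increment}s can stop only when $a_0$ reaches $-1$ (triggering \underline{Halve}) or when $n$ reaches $0$, and an \underline{Overflow} can occur only in the ascending middle run because it requires $\sigma=+1$. As $E$ is empty, $a_0(E)$ is even, so $s_1=(2^{\ell}-1)-a_0(E)$ is odd; write $s_1=2h_1+1$. Throughout I use that, by \cref{prop:incr}, the coordinate changes in an \underline{Increment} run depend only on the endpoint values of $n$.

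Next I would isolate two facts about $d_j$ (using $\langle x/2^j\rangle=\floor{x/2^j+1/2}$): for $j\ge 1$, $d_j(m,m+1)=1$ exactly when $j=\nu_2(m+1)+1$ and is $0$ otherwise — so one \underline{Increment} moving $n$ by a unit raises exactly one coordinate $a_p$ with $p\ge 1$ (namely $p=\nu_2(m+1)+1$ on a step $m\to m+1$, and $p=\nu_2(m)+1$ on a step $m\to m-1$) by $1$, while lowering $a_0$ by $1$; additivity of $d_j$ over monotone ranges then writes $d_j(m,m')$ as a count of valuations. Second, for $i\ge 2$, $\langle(2a+1)/2^i\rangle=\langle a/2^{i-1}\rangle$ (equality could fail only if an even integer were $\equiv 2^{i-1}-1\pmod{2^i}$), so $d_i(2a+1,2b+1)=d_{i-1}(a,b)$. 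With these, the case $i\ge 2$ is immediate: \cref{lemma:change} gives $h(E[i])=h(E)$ and $s(E[i])=s(E)$, so $T_{E[i]}$ follows the same $n$-trajectory, and since adding $2$ at an index $\le\ell$ alters no parity and (for $i\ge 1$) no \underline{Halve} trigger, $T_{E[i]}$ runs exactly the rules of $T_E$; the bump $2\mathbf{e}_i$ rides unchanged through the \underline{Increment} runs and drops one index at each of the two \underline{Halve}s, so $N(E[i])=N(E)[i-2]$, and $N(E[i])$ is embanked because it inherits the rule list of $T_E$.

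For $i=0$ and $i=1$ the trajectory changes: by \cref{lemma:change}, the descending first run now overshoots to $s_1-2$ (if $i=0$) or the ascending middle run overshoots to $s_2+2$ (if $i=1$), with the matching change in $h$. I would track the running discrepancy between the states of $T_{E[i]}$ and $T_E$ at each phase boundary via \cref{prop:incr} and additivity. The overshooting \underline{Increment}s inject the term $\sum_{j\ge 1}d_j(s_1,s_1-2)\mathbf{e}_j$ (resp.\ $\sum_{j\ge 1}d_j(s_2,s_2+2)\mathbf{e}_j$), a sum of two unit-step contributions by the first fact; this term then gets lowered by the two (resp.\ the second) remaining \underline{Halve}s, while the $\pm1$ mismatch in the length of the neighbouring run cancels the index-$0$ part of the discrepancy. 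For $i=0$ one rewrites $d_{j+2}(s_1,s_1-2)=d_{j+1}(h_1,h_1-1)$ via the second fact and $s_1=2h_1+1$; for $i=1$ one uses $h_2=\floor{s_2/2}$ to see $\nu_2(h_2+1)+1$ equals whichever of $\nu_2(s_2+1),\nu_2(s_2+2)$ is positive (the other being $0$). In both cases the discrepancy collapses to a single bump, $2\mathbf{e}_{\nu_2(h_1)}$ (resp.\ $2\mathbf{e}_{\nu_2(h_2+1)+1}$), giving the claimed index. Lastly the final descending run stays pure \underline{Increment}s — it has $\sigma=-1$, hence no \underline{Overflow}, and $a_0$ remains $\ge 0$ because it overshoots only by the small amount just found — so $N(E[i])$ exists, has $n=0$, $\sigma=-1$, and the same top-coordinate pattern as $N(E)$ (hence is empty, not of \underline{Halt} form), and $T_{E[i]}$ is embanked.

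The hard part is the bookkeeping in this last case: correctly threading the index relabellings caused by \underline{Zero} ($+2$) and the two \underline{Halve}s ($-1$ each) through the \emph{perturbed} \underline{Increment} runs, and checking that the $d_j$-terms telescope to a single $2\mathbf{e}_\bullet$. The $\nu_2$'s in the statement are invisible a priori: they surface only once one records which coordinate a unit \underline{Increment} touches and then feeds in $h_i=\floor{s_i/2}$ through $d_i(2a+1,2b+1)=d_{i-1}(a,b)$. One should also confirm the target indices $\nu_2(h_1)$ and $\nu_2(h_2+1)+1$ lie in $[0,\ell]$, which follows since $n$ takes only values realizable as some $n(S)$ with $\ell(S)=\ell$.
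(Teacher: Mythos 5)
Your sketch takes essentially the same route as the paper: for $i\ge 2$ observe that $h(E[i])=h(E)$, $s(E[i])=s(E)$, so the rule sequence is literally unchanged and the bump at index $i$ just drops by two through the two \underline{Halve}s; for $i\in\{0,1\}$, compare the coordinate formulas for $a_j(E[i])$ and $a_j(N(E[i]))$ coming from \cref{prop:incr}, observe that everything telescopes to a single $2d_\bullet$ term, and read off which index is raised. Your two isolated $d$-facts (that $d_j(m,m+1)=1$ exactly when $j=\nu_2(m+1)+1$, and that $d_i(2a+1,2b+1)=d_{i-1}(a,b)$ for $i\ge 2$) are exactly the identities the paper invokes implicitly via ``$d_{j+1}(s_2+2,s_2)=d_j(h_2+1,h_2)$'' and via the half-integer criterion. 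One small caveat: the claim that ``the $\pm1$ mismatch in the length of the neighbouring run cancels the index-$0$ part of the discrepancy'' is literally true only for $i=1$; for $i=0$ the index-$0$ discrepancy is $2d_1(h_1-1,h_1)$, which can be $0$ \emph{or} $2$, and the embankedness conclusion rests on it being $\ge 0$ rather than exactly $0$. The bookkeeping you correctly flag as ``the hard part'' — threading the index shifts from \underline{Zero} and the two \underline{Halve}s through the three \underline{Increment} runs, and checking the telescoping — is exactly what the paper writes out in full via the explicit formulas for $a_j(E[i])$ and $a_j'(N(E[i]))$; your sketch outlines it correctly but does not carry it out.
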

\begin{proof}
    If $i\geq 2$, then by \cref{lemma:change}, $h(N(E)) = h(E)$. $N(E)$ is just $E$ with $a_i$ incremented by $+2$, and the $i^{th}$ index will shift to the $(i-2)^{th}$ index upon another application of $N$. Since the indices $a_1$ and $a_0$ are the same for $E$ and $N(E)$, we have $T_E = T_{N(E)}$, hence $N(E)$ is embanked with $N(N(E)) = N(E)[i-2]$. 
    
    The nontrivial cases are $i \in \{0,1\}$. If $i = 1$, then \cref{lemma:change} implies that, compared to $T_E$, the transition $T_{E[1]}$ acquires exactly two additional \underline{Increment} rules between the first and second \underline{Halve} rules, as well as one additional \underline{Increment} rule after the second \underline{Halve} rule. Let us analyze what happens to the $j^{th}$ index for each $j \in [0,\ell]$. Let $T_E'$ denote the transition $\underline{\text{Zero}}(E) \to N(E)$; this is $T_E$ but with the first transition removed. By \cref{prop:incr}, we have 
    \begin{align*}
        a_j(E[1]) &= \begin{cases}
            a_{j+2}(\underline{\text{Zero}}(E)) + d_{j+2}(2^{\ell}-1,s_1) + d_{j+1}(h_1, s_2) + d_j(h_2,0) & j \geq 1 \\
            a_{j+2}(\underline{\text{Zero}}(E)) + d_{j+2}(2^{\ell}-1,s_1) + d_{j+1}(h_1, s_2) - d_j(h_2,0) & j = 0,
        \end{cases}
    \end{align*}
    noting that (1) the indices shift every time a \underline{Halve} rule is applied, and (2) all $d_j$'s are in fact defined, since \underline{Zero} adds two extra indices to the state. Moreover, define
    \begin{align*}
        a_j'(N(E[1])) &:= \begin{cases}
            a_{j+2}(\underline{\text{Zero}}(E[1])) + d_{j+2}(2^{\ell}-1,s_1) + d_{j+1}(h_1, s_2+2) + d_j(h_2+1,0) & j \geq 1 \\
            a_{j+2}(\underline{\text{Zero}}(E[1])) + d_{j+2}(2^{\ell}-1,s_1) + d_{j+1}(h_1, s_2+2) - d_j(h_2+1,0) & j = 0,
        \end{cases}
    \end{align*}
     which agrees with $a_j(N(E[1])$ if $N(E) = E[1]$ is embanked, but will still make sense if $N(E)$ is only weakly embanked. Note that $N(E)$ will fail to be embanked if and only if $a_0'(N(E[1])) < 0$; in this case, for $T_{E[1]}$ after the second \underline{Halve} rule, the quantity $a_0$ decrements to $-1$ before $n$ can decrement to $0$, at which point a third \underline{Halve} rule has to be applied. From the above expressions, we obtain
    \begin{align*}
        a_j'(N(E[1])) - a_j(E[1]) &= \begin{cases}
            \left(d_{j+1}(h_1,s_2+2) - d_{j+1}(h_1,s_2)\right) + \left(d_j(h_2+1,0) - d_j(h_2,0)\right) & j \geq 1 \\
            \left(d_{j+1}(h_1,s_2+2) - d_{j+1}(h_1,s_2)\right) - \left(d_j(h_2+1,0) - d_j(h_2,0)\right) & j = 0
        \end{cases} \\
        &= \begin{cases}
            d_{j+1}(s_2+2,s_2) + d_j(h_2+1,h_2) & j \geq 1 \\
            d_{j+1}(s_2+2,s_2) - d_j(h_2+1,h_2) & j = 0 
        \end{cases} \\
        &= \begin{cases}
            2d_j(h_2+1,h_2) & j \geq 1 \\
            0 & j = 0
        \end{cases}
    \end{align*}
    where the first equality is justified by the fact that $a_{j+2}(\underline{\text{Zero}}(E[1])) = a_{j+2}(\underline{\text{Zero}}(E))$ by the definition of $E[i]$, and the third equality is justified by verifying that $d_{j+1}(s_2+2,s_2) = d_j(h_2+1,h_2)$ always holds. We learn $a_j'(N(E[1])) - a_j(E[1]) \geq 0$, and therefore that $N(E)$ is embanked. Moreover, we have exactly one index $j \in [0,\ell(E)]$ such that $N(E[1]) = E[1][j]$; this is the index for which $d_j(h_2+1,h_2) > 0$. Such $j$ is characterized by $(h_2+1)/2^j$ being a half-integer, and this occurs precisely when $\nu_2(h_2+1) = j-1$. The desired claim follows for the $i = 1$ case.
    
    If $i=0$, a completely analogous analysis occurs: we compute
    \begin{align*}
        a_j(E[0]) &= \begin{cases}
            a_{j+2}(\underline{\text{Zero}}(E)) + d_{j+2}(2^{\ell}-1,s_1) + d_{j+1}(h_1, s_2) + d_j(h_2,0) & j \geq 1 \\
            a_{j+2}(\underline{\text{Zero}}(E)) + d_{j+2}(2^{\ell}-1,s_1) + d_{j+1}(h_1, s_2) - d_j(h_2,0) & j = 0,
        \end{cases}
    \end{align*}
    and define
    \begin{align*}
        a_j'(N(E[0])) &:= \begin{cases}
            a_{j+2}(\underline{\text{Zero}}(E[0])) + d_{j+2}(2^{\ell}-1,s_1-2) + d_{j+1}(h_1-1, s_2) + d_j(h_2,0) & j \geq 1 \\
            a_{j+2}(\underline{\text{Zero}}(E[0])) + d_{j+2}(2^{\ell}-1,s_1-2) + d_{j+1}(h_1-1, s_2) - d_j(h_2,0) & j = 0,
        \end{cases}
    \end{align*}
    so that for all $j \in [0,\ell]$, we have 
    \begin{align*}
        a_j'(N(E[0])) - a_j(E[0]) &= d_{j+2}(s_1-2,s_1) + d_{j+1}(h_1-1,h_1) \\
        &= 2d_{j+1}(h_1-1, h_1).
    \end{align*}
    As in the $i = 1$ case, the fact that $a_j'(N(E[0])) - a_j(E[0]) \geq 0$ immediately implies that $N(E)$ is embanked, and in fact $a_j'(N(E[0])) - a_j(E[0])$ positive if and only if $h_1/2^{j+1}$ is a half-integer, which occurs precisely when $\nu_2(h_1) = j$. This completes the proof.
\end{proof}
In light of \cref{prop:nextindex}, we are induced to \textit{speed up} the function $N(\cdot)$ to a new function $N'(\cdot)$:
\begin{definition}\label{def:rootedembanked}
    Define the following terms:
    \begin{enumerate}
        \item For an embanked state $E$ such that $N(E) = E[i]$, define $N'(E)$ to be the state
        \begin{align*}
            N'(E) &:= N^{\ceil{(i(E)+1)/2}}(E) \\
            &= \begin{cases}
            E[i][i-2]\cdots[3][1] & i~\text{odd} \\
            E[i][i-2]\cdots[2][0] & i~\text{even},
        \end{cases}
        \end{align*}
        where the second equality holds by \cref{prop:nextindex}.
        \item Say that an embanked state $E$ is \textit{rooted embanked} if it is the result of applying $N'$ to $S_k$ some amount of times, i.e. there exists $e \in \mathbb{N}$ such that $E = (N')^e(S_k)$.

        \item For $i \in \{0,1\}$, say that a rooted embanked state $E$ is \textit{$i$-rooted embanked} if $E = N^{-1}(E)[i]$. Every rooted embanked state is either $0$-rooted embanked or $1$-rooted embanked.

        \item For a state transition $E \to E'$ of rooted embanked states, let $T_{E \to E'}'$ denote the sequence of $N$'s that were applied to $E$ to achieve $E'$.
    \end{enumerate}
\end{definition}
We have the immediate corollary:
\begin{corollary} \label{cor:nextindex}
    For $i \in \{0,1\}$, let $E$ be an $i$-rooted embanked state with $h(N^{-1}(E)) = (h_1,h_2)$, and assume that $N(E)$ is weakly embanked. Then we have 
    \begin{align*}
        N(E) &= \begin{cases}
            E[\nu_2(h_1)] & i = 0 \\
            E[\nu_2(h_2+1)+1] & i = 1,
        \end{cases}
    \end{align*}
    so that $N'(E)$ is $\overline{\nu_2(h_1)}$-rooted embanked if $i = 0$, and $\overline{\nu_2(h_2+1)+1}$-rooted embanked if $i = 1$.
\end{corollary}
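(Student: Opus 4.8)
The plan is to read off both assertions directly from Proposition \ref{prop:nextindex} by feeding it the state $N^{-1}(E)$ and then unwinding Definition \ref{def:rootedembanked}. Let $F := N^{-1}(E)$, the rooted embanked state for which $N(F) = F[i] = E$ and $h(F) = (h_1,h_2)$; such an $F$ exists precisely because $E$ is $i$-rooted embanked, and the data $h(N^{-1}(E)) = (h_1,h_2)$ is the given. Since $E$ is itself embanked it is in particular weakly embanked, so Proposition \ref{prop:nextindex} applies to the embanked state $F$ with rooting index $i \in \{0,1\}$ and yields both that $E = N(F)$ is embanked and that
\[
N(E) = N(N(F)) = \begin{cases} N(F)[\nu_2(h_1)] = E[\nu_2(h_1)], & i = 0,\\ N(F)[\nu_2(h_2+1)+1] = E[\nu_2(h_2+1)+1], & i = 1.\end{cases}
\]
This is exactly the displayed identity.

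For the type of $N'(E)$, set $j := i(E)$, so that by the identity just proved $j = \nu_2(h_1)$ when $i = 0$ and $j = \nu_2(h_2+1)+1$ when $i = 1$. By Definition \ref{def:rootedembanked}(1), $N'(E)$ is obtained from $E$ by the chain $E[j][j-2]\cdots[1]$ if $j$ is odd and $E[j][j-2]\cdots[0]$ if $j$ is even, so in both cases the last bracket applied is $[\,\overline j\,]$ with $\overline j := j \bmod 2$. Writing $G$ for the penultimate state in this chain, we have $N(G) = G[\overline j]$, hence $N'(E) = G[\overline j] = N^{-1}(N'(E))[\overline j]$, and so by Definition \ref{def:rootedembanked}(3) the rooted embanked state $N'(E)$ is $\overline j$-rooted embanked. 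Substituting the two values of $j$ gives the stated conclusion for $i = 0$ and $i = 1$ respectively.

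The only point that is not purely formal is that $N'(E)$ must genuinely be a rooted embanked state for the last line above to have content, i.e. every intermediate state $E[j], E[j][j-2], \dots$ in the chain must itself be embanked. This is exactly where the hypothesis that $N(E)$ is weakly embanked is used: it lets us invoke Proposition \ref{prop:nextindex} once more, now with $E$ itself playing the role of the embanked state, to launch the chain, after which propagation of embankedness down the rest of the chain is the same inductive bookkeeping that already underlies the well-definedness of $N'$ in Definition \ref{def:rootedembanked}(1) (whose ``second equality'' is precisely Proposition \ref{prop:nextindex}). Since nothing beyond this is needed, the statement really is an immediate corollary; the one mild pitfall to keep straight is that $N^{-1}(\cdot)$ here means the predecessor in the $N$-orbit, not in the $N'$-orbit.
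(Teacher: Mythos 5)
Your proof is correct, and it fills in the routine unpacking the paper leaves to the reader when it calls this an ``immediate corollary'' of Proposition~\ref{prop:nextindex} and Definition~\ref{def:rootedembanked}. The essential move—feeding $F := N^{-1}(E)$ into Proposition~\ref{prop:nextindex} (with $N(F) = E$ already weakly embanked because $E$ is embanked) to read off $N(E) = N(N(F))$, then using the bracket chain $E[j][j-2]\cdots[\overline{j}]$ from Definition~\ref{def:rootedembanked}(1) to identify the rooting type of $N'(E)$—is exactly the intended argument, and you correctly flag that the ``$N(E)$ weakly embanked'' hypothesis is what launches the chain rather than what produces the first displayed formula.
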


\section{Proof of nonhalting}

\begin{proposition} \label{prop:regular}
    Let $E$ be a $1$-rooted embanked state such that there exists $m \in [0,2^{2k}-2)$ of odd $2$-adic valuation such that $h(N^{-1}(E)) = (2^{2k}-m-1, 2^{2k}+m)$. Then the following hold:
    \begin{enumerate}
        \item If $m' > m$ in $[0,2^{2k}-2]$ denotes the next number after $m$ satisfying $\nu_2(m') \equiv 1 \bmod 2$, then there exists a unique 1-rooted embanked state $E'$ satisfying  $E \mapsto E'$, such that $h(N^{-1}(E')) = (2^{2k}-m'-1, 2^{2k}+m')$.
        \item In the setting of (2), let $d := m'-m$. Then $a_i(E') = a_i(E) + 2d$ for $i \in \{0,1\}$.
    \end{enumerate}
\end{proposition}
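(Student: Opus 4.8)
The plan is to recast the problem in terms of the dynamics that $N'$ induces on the pair $h(E) = (h_1(E), h_2(E))$ together with the bit recording whether $E$ is $0$- or $1$-rooted. Combining \cref{cor:nextindex} with \cref{lemma:change} --- which records that a bracket $[j]$ with $j \geq 2$ leaves $h$ fixed, $[1]$ raises $h_2$ by $1$, and $[0]$ lowers $h_1$ by $1$ --- and the definition of $N'$, I would first establish the local rule (valid whenever the states in the next block are weakly embanked): from a $1$-rooted embanked state with $h = (h_1, h_2)$, one application of $N'$ lands on a $1$-rooted state with $h = (h_1, h_2+1)$ when $\nu_2(h_2)$ is even and on a $0$-rooted state with $h = (h_1-1, h_2)$ when $\nu_2(h_2)$ is odd; from a $0$-rooted state with $h = (h_1, h_2)$, it lands on a $0$-rooted state with $h = (h_1-1, h_2)$ when $\nu_2(h_1+1)$ is even and on a $1$-rooted state with $h = (h_1, h_2+1)$ when $\nu_2(h_1+1)$ is odd. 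Two further observations: since $N'(E) = E[i][i-2]\cdots[1]$ or $E[i][i-2]\cdots[0]$ according to the parity of $i$ (with $N(E) = E[i]$), each $N'$-step changes exactly one of $a_0, a_1$, namely $a_1 \mapsto a_1+2$ precisely when the output is $1$-rooted and $a_0 \mapsto a_0+2$ precisely when it is $0$-rooted; and \cref{lemma:change} turns the hypothesis on $h(N^{-1}(E))$ into $h(E) = (2^{2k}-m-1, 2^{2k}+m+1)$. The only arithmetic inputs beyond this are $\nu_2(2^{2k}\pm j) = \nu_2(j)$ for $0 < j < 2^{2k}$ and the fact, built into the definition of $m'$, that every integer strictly between $m$ and $m'$ has even $2$-adic valuation while $m$ and $m'$ do not.

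Running the system from $h(E) = (2^{2k}-m-1, 2^{2k}+m+1)$ then settles part (1). Since $m$ is even, $\nu_2(m+1) = 0$, and the orbit splits into two phases. In \emph{phase one} the state stays $1$-rooted while $h_2$ climbs one unit at a time through $2^{2k}+(m+1), \dots, 2^{2k}+m'$ --- the intervening valuations all being even --- with $h_1$ frozen at $2^{2k}-m-1$; reaching $h_2 = 2^{2k}+m'$, whose valuation is odd, the next $N'$ flips to $0$-rooted and drops $h_1$ to $2^{2k}-m-2$. In \emph{phase two} the state stays $0$-rooted while $h_1$ descends, the quantity $h_1+1$ running through $2^{2k}-(m+1), 2^{2k}-(m+2), \dots$ with $h_2$ frozen at $2^{2k}+m'$; these valuations are even until $h_1+1 = 2^{2k}-m'$, whereupon $N'$ flips back to $1$-rooted and raises $h_2$ to $2^{2k}+m'+1$. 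The state $E' := (N')^{2d}(E)$ reached after these $2d$ steps is $1$-rooted with $h(E') = (2^{2k}-m'-1, 2^{2k}+m'+1)$, equivalently $h(N^{-1}(E')) = (2^{2k}-m'-1, 2^{2k}+m')$; uniqueness follows from determinism of the orbit together with the phase structure, which shows this profile is attained at a $1$-rooted state exactly once in the forward orbit of $E$. Part (2) is then immediate from the bookkeeping: $a_1$ gains $2$ at each step outputting a $1$-rooted state and is otherwise fixed, so $a_1(E')-a_1(E) = 2\bigl(h_2(E')-h_2(E)\bigr) = 2(m'-m) = 2d$, and symmetrically $a_0(E')-a_0(E) = 2\bigl(h_1(E)-h_1(E')\bigr) = 2d$.

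The main obstacle is the weak-embankedness side condition that \cref{cor:nextindex} and \cref{prop:nextindex} require at every step; I would discharge it by an induction run alongside the orbit analysis above. For an embanked state the identities $a_0 = 2^{2k+1}-2-2h_1$ and $a_1 = 2h_2-2^{2k}$ hold, derived exactly as in the proof of \cref{prop:weaklyembankedbounds}, so the two conditions there, applied to the empty state $E''[i] = N(E'')$, reduce to $h_1(E'') \geq 1$ and $h_2(E'') \leq 2^{2k+1}-1$, up to a harmless $\pm 1$ adjustment according to which of $a_0, a_1$ is bumped. Throughout the orbit one has $h_1 \geq 2^{2k}-m'-1 \geq 1$ and $h_2 \leq 2^{2k}+m'+1 \leq 2^{2k+1}-1$, both forced precisely by the standing hypothesis $m' \leq 2^{2k}-2$; the only delicate point is the boundary value $m' = 2^{2k}-2$, where the inequalities are tight and one must check they still hold at $E'$ itself, even though the block immediately after $E'$ would overflow --- but that overflow lies outside the scope of this proposition. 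This verification is where essentially all of the argument's arithmetic lives.
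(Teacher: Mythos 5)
Your proposal is correct and follows essentially the same route as the paper: the same two-phase orbit (a $[1]$-phase raising $h_2$ from $2^{2k}+m+1$ up to $2^{2k}+m'$, then a $[0]$-phase lowering $h_1$ down to $2^{2k}-m'-1$), the same appeal to \cref{cor:nextindex} and \cref{lemma:change} for the one-step dynamics, and the same implicit check that $h_1, h_2$ stay within the range making \cref{prop:weaklyembankedbounds} hold. The only cosmetic difference is bookkeeping via $h(E)$ rather than the paper's $h(N^{-1}(E))$, and you correctly obtain $a_0(E') = a_0(E) + 2d$, where the paper's proof has the typo ``decremented.''
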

\begin{proof}
     Suppose $E$ and $m$ are as in the problem statement. In what follows, the quantities $a_1$ and $a_0$ never breach the bounds given by the conditions in \cref{prop:weaklyembankedbounds}; combining this with \cref{cor:nextindex}, it follows that all $N'$-iterates of $E$ we consider will be rooted embanked. 
     
     We claim that for each $e \in [0,m'-m-1]$, $E_e := (N')^e(E)$ is $1$-rooted embanked with $h(N^{-1}(E_e)) = (2^{2k}-m-1, 2^{2k}+m+e)$. This follows by induction on $e$: the base case $e = 0$ is given, while for the induction step, if $e < m'-m-1$ satisfies the induction hypothesis, then since $m+e+1 < m'$, it must necessarily have even valuation; hence by \cref{cor:nextindex}, $E_{e+1} := N'(E_e)$ is $\overline{\nu_2((m+e)+1)+1} = 1$-rooted embanked, and moreover by \cref{lemma:change} we have $h(N^{-1}(E_{e+1})) = h(N^{-1}(E_e)[1]) = (2^{2k}-m-1, 2^{2k}+m+e+1)$. This completes the induction. In addition, we also learn that from $E$ to $E_{m'-m-1}$, the quantity $a_1$ increments by $2(m'-m-1).$

    When we apply $N'$ to $E_{m'-m-1}$, we obtain a state $E'$ such that $h(N^{-1}(E')) = (2^{2k}-m-1, 2^{2k}+m')$. Since $\nu_2((m'-1)+1) = \nu_2(m')$ is odd, it follows by \cref{cor:nextindex} that $E' = N'(E_{m'-m-1})$ is $\overline{\nu_2((m'-1)+1)+1} = 0$-rooted. We claim that for each $e \in [0,m'-m-1]$, $E_e' := (N')^e(E')$ is $0$-rooted embanked with $h(N^{-1}(E_e')) = (2^{2k}-m-1-e, 2^{2k}+m')$. This, again, follows by induction on $e$: the base case $e = 0$ is given, while for the induction step, if $e < m'-m-1$ satisfies the induction hypothesis, then since $m+e+1 < m'$, it must necessarily have even valuation; hence by \cref{cor:nextindex}, $E_{e+1}' := N'(E_e')$ is $\overline{\nu_2(2^{2k}-m-1-e)} = 0$-rooted embanked, and moreover by \cref{lemma:change} we have $h(N^{-1}(E_{e+1}')) = h(N^{-1}(E_e')[0]) = (2^{2k}-m-2-e, 2^{2k}+m')$. This completes the induction. In addition, we also learn that from $E_{m'-m-1}$ to $E_{m'-m-1}'$, the quantity $a_0$ decrements by $2(m'-m).$

    Finally, applying $N'$ to $E_{m'-m-1}'$ yields a state $E''$ such that $h(N^{-1}(E'')) = (2^{2k}-m'-1, 2^{2k}+m')$, and $E''$ is $\overline{\nu_2(2^{2k}-m')} = 1$-rooted embanked, which proves (1). This extra application of $N'$ also increments $a_1$ by $2$, so in total, $a_1$ has incremented by $2(m'-m-1)+2 = 2d$, while $a_0$ has decremented by $2(m'-m) = 2d$; this proves (2).
\end{proof}
Starting with $S_k = (0,2,2^2,\dots,2^{2k},0)$, a brief computation yields $h(S_k) = (2^{2k}-1,2^{2k})$ and $N(S_k) = S_k[2k+1]$. Thus, applying $N'$ to $S_k$ five times yields a $1$-rooted embanked state $E := (N')^5(S_k)$ such that $h(N^{-1}(E)) = (2^{2k}-3, 2^{2k}+2)$. Hence, by inducting on $m$ to reach $m = 2^{2k}-2$, \cref{prop:regular} and \cref{prop:weaklyembankedbounds} imply the following corollary:
\begin{corollary}
    There exists a rooted embanked state $E$ such that $h(N^{-1}(E)) = (1, 2^{2k+1}-2)$.
\end{corollary}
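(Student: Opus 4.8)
\emph{Proof plan.} The idea is to realize $E$ as the last term of a finite chain of rooted embanked states built by iterating \cref{prop:regular}, starting from $E^{(0)} := (N')^5(S_k)$. The computation recorded just above shows $E^{(0)}$ is $1$-rooted embanked with $h(N^{-1}(E^{(0)})) = (2^{2k}-3,\, 2^{2k}+2)$, which is exactly the shape $(2^{2k}-m-1,\, 2^{2k}+m)$ required by \cref{prop:regular} with $m = 2$; and $\nu_2(2) = 1$ is odd, so $E^{(0)}$ is an admissible starting point (here and below $k\geq 1$).

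Next I would set up the induction on $m$. Let $2 = m_0 < m_1 < \cdots < m_r$ enumerate, in increasing order, the integers of $[2,\, 2^{2k}-2]$ having odd $2$-adic valuation; this set is nonempty and its largest element is $m_r = 2^{2k}-2$, since $2^{2k}-2 = 2(2^{2k-1}-1)$ has $\nu_2 = 1$ for $k \geq 1$. For $0 \leq j < r$ we have $m_j < 2^{2k}-2$ and $m_{j+1}$ is precisely the next integer after $m_j$ of odd valuation, so part (1) of \cref{prop:regular} applies to a $1$-rooted embanked $E^{(j)}$ with $h(N^{-1}(E^{(j)})) = (2^{2k}-m_j-1,\, 2^{2k}+m_j)$ and returns a $1$-rooted embanked $E^{(j+1)}$ with $E^{(j)} \mapsto E^{(j+1)}$ and $h(N^{-1}(E^{(j+1)})) = (2^{2k}-m_{j+1}-1,\, 2^{2k}+m_{j+1})$. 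Running this from $j = 0$ to $j = r-1$ yields a rooted embanked state $E := E^{(r)}$ with
\[
h(N^{-1}(E)) = \bigl(2^{2k}-(2^{2k}-2)-1,\; 2^{2k}+(2^{2k}-2)\bigr) = (1,\, 2^{2k+1}-2),
\]
which is the claim.

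The only genuine obstacle is legitimacy: the proof of \cref{prop:regular} tacitly relies on $a_0$ and $a_1$ of the empty states it passes through never breaching the bounds of \cref{prop:weaklyembankedbounds}, i.e.\ $a_0 < 2^{2k+1}-1$ and $a_1 < 3\cdot 2^{2k}-1$, which is what keeps every state weakly (hence, via \cref{prop:nextindex}, fully) embanked. To discharge this I would invoke part (2) of \cref{prop:regular}: passing from $E^{(j)}$ to $E^{(j+1)}$ changes each of $a_0, a_1$ by $+2(m_{j+1}-m_j)$, and along the intervening $N'$-orbit both $a_0$ and $a_1$ are monotone nondecreasing (each elementary step $E' \mapsto E'[i]$ can only increase coordinates), so telescoping gives the maximal values
\[
a_i(E^{(r)}) = a_i(E^{(0)}) + 2(m_r - m_0) = a_i(E^{(0)}) + 2(2^{2k}-4), \qquad i \in \{0,1\}.
\]
A short computation of the base values from the first five $N'$-iterates of $S_k$ — counting, via \cref{lemma:change}, how often indices $0$ and $1$ get bumped, which equals the net decrease of $h_1$ and the net increase of $h_2$ respectively — gives $a_0(E^{(0)}) = 4$ and $a_1(E^{(0)}) = 2^{2k}+6$, whence $a_0(E^{(r)}) = 2^{2k+1}-4 < 2^{2k+1}-1$ and $a_1(E^{(r)}) = 3\cdot 2^{2k}-2 < 3\cdot 2^{2k}-1$. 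In fact $a_1(E^{(r)})$ is the largest value compatible with weak embankedness: the two ``banks'' $a_0$ and $a_1$ fill up in step and just barely avoid overflowing before $m$ reaches $2^{2k}-2$, which is precisely why $(1,\, 2^{2k+1}-2)$ is the natural terminal configuration. This bookkeeping is the only real work; the rest is a direct unwinding of \cref{prop:regular}.
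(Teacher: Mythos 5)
Your proposal takes essentially the same route as the paper: starting from the observation that $(N')^5(S_k)$ is $1$-rooted embanked with $h(N^{-1}(\cdot)) = (2^{2k}-3, 2^{2k}+2)$, and iterating \cref{prop:regular} over the increasing sequence of $m$'s with odd $2$-adic valuation until $m = 2^{2k}-2$. The paper's own justification is a single terse sentence (``by inducting on $m$ to reach $m = 2^{2k}-2$''), while you additionally spell out the bookkeeping that the paper leaves implicit — namely that the \cref{prop:weaklyembankedbounds} bounds on $a_0, a_1$ hold throughout, verified via monotonicity of coordinates under $S \mapsto S[j]$ together with the telescoped base values $a_0(E^{(0)}) = 4$, $a_1(E^{(0)}) = 2^{2k}+6$ and the $+2d$ increments from \cref{prop:regular}(2) — which is a correct and welcome elaboration of the same argument rather than a different one.
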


\subsection{Counting increments}
For rooted embanked states $E \mapsto E'$, let $\kappa_{E \to E'}(j)$ denote the number of $N\colon S \mapsto N(S)$ in $T_{E \to E'}'$ such that $N(S) = S[j]$.

\begin{proposition}
    Suppose we are in the setting of \cref{prop:regular}, with $E$, $E'$, $m$ and $m'$ defined as before. Then, for all $j \in [0,2k+1]$, we have $\kappa_{E \to E'}(j) = \#\{e \in [m+1,m'] \colon \max(1,2^{j-1})\mid e\}$.
\end{proposition}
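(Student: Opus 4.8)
The plan is to group the $N$-steps recorded in $T_{E\to E'}'$ into the successive applications of $N'$ they form, to read off from \cref{cor:nextindex} the ``entry index'' $\iota$ of each such application --- the $\iota$ for which its leading $N$-step is $S\mapsto S[\iota]$ --- and then to convert the resulting tally into the stated divisibility count. First I would recall, from the proof of \cref{prop:regular}, that $E\mapsto E'$ is realized by exactly $2d = 2(m'-m)$ applications of $N'$, organized into two phases of $m'-m$ applications each: a first phase through $1$-rooted embanked states $E_e$ ($e = 0,\dots,m'-m-1$) with $h(N^{-1}(E_e)) = (2^{2k}-m-1,\ 2^{2k}+m+e)$, and a second phase through $0$-rooted embanked states $F_e$ ($e = 0,\dots,m'-m-1$) with $h(N^{-1}(F_e)) = (2^{2k}-m-1-e,\ 2^{2k}+m')$, the ``bridge'' applications leaving $E_{m'-m-1}$ and $F_{m'-m-1}$ being counted within the respective phases.

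Next I would note that, by the definition of $N'$ in \cref{def:rootedembanked} together with \cref{prop:nextindex}, a single application $N'(G)$ with $N(G) = G[\iota]$ is the chain of $N$-steps $G\mapsto G[\iota]\mapsto G[\iota][\iota-2]\mapsto\cdots$, which uses the indices $\iota,\iota-2,\iota-4,\dots$ down to $1$ (if $\iota$ is odd) or $0$ (if $\iota$ is even), each exactly once. Hence an $N'$-application with entry index $\iota$ contributes $1$ to $\kappa_{E\to E'}(j)$ when $j\le\iota$ and $j\equiv\iota\pmod 2$, and $0$ otherwise; so $\kappa_{E\to E'}(j)$ equals the number of the $2d$ entry indices $\iota$ with $\iota\ge j$ and $\iota\equiv j\pmod 2$. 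It then remains to pin down those entry indices. For the first phase, the $i=1$ case of \cref{cor:nextindex} gives the entry index $\nu_2(2^{2k}+m+e+1)+1$, and since $1\le m+e+1\le m'<2^{2k}$ we have $\nu_2(m+e+1)<2k$, so this equals $\nu_2(m+e+1)+1$; for the second phase, the $i=0$ case of \cref{cor:nextindex} gives the entry index $\nu_2(2^{2k}-m-1-e) = \nu_2\bigl(2^{2k}-(m+e+1)\bigr) = \nu_2(m+e+1)$, by the same valuation identity. Writing $t := m+e+1$, which runs through $[m+1,m']$ as $e$ runs through $[0,m'-m-1]$, the multiset of $2d$ entry indices is therefore $\{\,\nu_2(t)+1 : t\in[m+1,m']\,\}\cup\{\,\nu_2(t) : t\in[m+1,m']\,\}$.

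Finally I would assemble the count. For fixed $j$ and $t$, set $v := \nu_2(t)\ge 0$; exactly one of $v$ and $v+1$ has the parity of $j$, and a short split on whether $v\ge j$, $v=j-1$, or $v\le j-2$ shows that the two entry indices arising from $t$ contribute a total of $1$ to $\kappa_{E\to E'}(j)$ precisely when $v\ge j-1$, i.e.\ precisely when $2^{j-1}\mid t$, with the case $j=0$ (where $\nu_2(t)\ge -1$ holds automatically) matching the convention $\max(1,2^{j-1})=1$. Summing over $t\in[m+1,m']$ gives $\kappa_{E\to E'}(j) = \#\{\,t\in[m+1,m'] : \max(1,2^{j-1})\mid t\,\}$, which is the asserted identity ($t$ playing the role of the statement's $e$); the boundary value $j=2k+1$ is consistent, as no $t\le m'<2^{2k}$ is divisible by $2^{2k}$ and no entry index ever reaches $2k+1$. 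I expect the main obstacle to be the bookkeeping of the first two steps --- correctly matching each $N'$-application in $T_{E\to E'}'$ to the $h(N^{-1}(\,\cdot\,))$ value supplied by the proof of \cref{prop:regular}, and hence to its entry index; once that identification is in place, the $2$-adic valuation identities and the final parity case-check are routine.
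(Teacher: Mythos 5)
Your proof is correct and takes essentially the same route as the paper's: both extract the entry indices of the $N'$-applications from \cref{cor:nextindex} and count how often index $j$ appears in the resulting downward chains $\iota,\iota-2,\dots$. The only difference is presentational --- you spell out the entry-index decomposition that the paper folds into its first displayed equality and tally per $t$ (pairing $\nu_2(t)$ with $\nu_2(t)+1$), whereas the paper tallies per parity of $j$ and simplifies using the fact that $m$ and $m'$ have odd valuation while everything strictly between has even valuation.
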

\begin{proof}
    For a statement $X$, define $\delta(X)$ to be $1$ if $X$ is true, and $0$ if $X$ is false. Fix a $j \in [0,2k+1]$. By \cref{cor:nextindex}, we have 
    \begin{align*}
        \kappa_{E \to E'}(j) &= \begin{cases}
          \delta(\nu_2((m'-1)+1)+1 \geq j) + \#\{e \in [m+1, m'-1]\colon \nu_2(e) \geq j\}   & j \equiv 0 \bmod 2 \\
         \#\{e \in [m, m'-2]\colon \nu_2(e+1)+1 \geq j\} + \delta(\nu_2(m') \geq j)   & j \equiv 1 \bmod 2
        \end{cases} \\
        &= \begin{cases}
          \delta(2^{j-1}\mid m') + \#\{e \in [m+1, m'-1]\colon 2^j \mid e\}   & j \equiv 0 \bmod 2 \\
          \#\{e \in [m+1, m'-1]\colon 2^{j-1} \mid e\} + \delta(2^j \mid m')   & j \equiv 1 \bmod 2
        \end{cases} \\
        &= \begin{cases}
          \delta(2^{j-1}\mid m') + \#\{e \in [m+1, m'-1]\colon 2^{j-1} \mid e\}   & j \equiv 0 \bmod 2 \\
          \#\{e \in [m+1, m'-1]\colon 2^{j-1} \mid e\} + \delta(2^{j-1} \mid m')   & j \equiv 1 \bmod 2
        \end{cases} \\
        &= \#\{e \in [m+1, m']\colon 2^{j-1} \mid e\},
    \end{align*}
    where the third equality is using the fact that $m$ and $m'$ have odd $2$-adic valuations, but every number in between has even $2$-adic valuation.
\end{proof}

\subsection{Endgame}
Our setup for the end of the proof is the sequence of state transitions $S_k \to E \to E' \to E''$, where $E$, $E'$ and $E''$ are all rooted embanked states satisfying $h(N^{-1}(E)) = (2^{2k}-3, 2^{2k}+2)$, $h(N^{-1}(E')) = (1, 2^{2k+1}-2)$ and $h(N^{-1}(E'')) = (1, 2^{2k+1}-1)$. In particular, we have $E' \to E''$ since \cref{prop:regular}(2) guarantees that $E'$ will satisfy the bounds in \cref{prop:weaklyembankedbounds} so that $E'$ is weakly embanked, and then \cref{cor:nextindex} implies that $E'$ is embanked. Also, we have $E = (N')^5(S_k)$. An explicit computation yields 
\begin{align*}
    \kappa_{S_k \to E}(j) &= \begin{cases}
        2 & j = 0 \\
        3 & j = 1 \\
        1 & j = 2 \\
        \bar{j} & 3 \leq j \leq 2k+1,
    \end{cases} \\
    \kappa_{E \to E'}(j) &= \#\{e \in [3,2^{2k}-2]\colon 2^{j-1}\mid e\} \\
    &= \begin{cases}
        2^{2k}-4 & j = 0,1 \\
        2^{2k-1}-2 & j = 2 \\
        2^{2k-j+1}-1 & j \in [3,2k] \\
        0 & j = 2k+1,
    \end{cases} \\
    \kappa_{E' \to E''}(j) &= \begin{cases}
        1 & j = 1 \\
        0 & j \neq 1.
    \end{cases}
\end{align*}
It follows that 
\begin{align*}
    \kappa_{S_k \to E''}(j) &= \kappa_{S_k \to E}(j) + \kappa_{E \to E'}(j) + \kappa_{E' \to E''}(j) \\
    &= \begin{cases}
        2^{2k}-2 & j = 0 \\
        2^{2k} & j = 1 \\
        2^{2k-j+1}-1 + \overline{j} & j \in [2,2k] \\
        1 & j = 2k+1.
    \end{cases}
\end{align*}
Since moving from $S$ to $S[j]$ increments $a_j$ by $2$, we obtain
\begin{align*}
    a_j(E'') &= a_j(S_k) + 2\kappa_{S_k \to E''}(j) \\
    &= \begin{cases}
        2^{2k+1} - 4 & j = 0 \\
        3\cdot 2^{2k} & j = 1 \\
        3 \cdot 2^{2k-j+1} - 2 + 2\overline{j} & j \in [2,2k] \\
        2 & j = 2k+1.
    \end{cases}
\end{align*}
Observe that $E''$ fails the conditions of \cref{prop:weaklyembankedbounds}, and so it will not make sense to apply $N$ anymore. We resort to an explicit analysis from here. From the state $E''$, applying \underline{Zero} yields 
\begin{align*}
    (0,0,3, 3\cdot 2^1-2, 3\cdot 2^2, 3\cdot 2^3-2, \dots, 3\cdot 2^{2k-1}-2, 3\cdot 2^{2k}, 2^{2k+1}-5) &(n,\sigma) = (2^{2k+1}-1,-1)
\end{align*}
after which applying $2^{2k+1}-4$ \underline{Increment} rules yields 
\begin{align*}
    &(0,0,2^2,2^3-2,2^4,2^5-2,\dots,2^{2k-1}-2, 2^{2k},2^{2k+1}-3,2^{2k+2}-2, -1)  &(n,\sigma)=(3,-1),
\end{align*}
and now we \underline{Halve} to obtain
\begin{align*}
    &(0,0,2^2,2^3-2,2^4,2^5-2,\dots,2^{2k-1}-2, 2^{2k},2^{2k+1}-3,2^{2k+2}-2). &(n,\sigma) = (1,+1).
\end{align*}
Applying $2^{2k+2}-2$ \underline{Increment} rules yields
\begin{align*}
    &(1,2,2^3,2^4-2,2^5,2^6-2,\dots,2^{2k}-2,2^{2k+1},2^{2k+2}-4,0) &(n,\sigma) = (2^{2k+2}-1, +1)
\end{align*}
after which we are forced to apply an \underline{Overflow} rule to get 
\begin{align*}
    &E_{k,\text{final}} :=(0,2,2,2^3,2^4-2,\dots,2^{2k}-2,2^{2k+1},2^{2k+2}-4,0) &(n,\sigma)=(0,-1).
\end{align*}
By \cref{prop:weaklyembankedbounds}, $E_{k,\text{final}}$ is weakly embanked, and one may compute that $h(E_{k,\text{final}}) = (2^{2k+2}-1,2^{2k+2}-2)$. Further computing the state formed after the second \underline{Halve} rule of $T_{E_{k,\text{final}}}$ shows that $E_{k,\text{final}}$ is in fact embanked with $N(E_{k,\text{final}})= 2k+1$. By \label{def:rootedembanked}(1), applying $N'(\cdot)$ to $E_{k,\text{final}}$ yields 
\begin{align*}
    E_{k,\text{final}}' &= (0,2,2^2,2^3,2^4,\dots,2^{2k},2^{2k+1},2^{2k+2}-2,0).
\end{align*}
By \cref{lemma:change}, we have $h(E_{k,\text{final}}') = (2^{2k+2}-1,2^{2k+2}-1)$; therefore, \cref{prop:nextindex} tells us that $N(E_{k,\text{final}}') = E_{k,\text{final}}'[\nu_2(2^{2k+2}-1)+1] = E_{k,\text{final}}'[1]$. Hence,
\begin{align*}
    N(E_{k,\text{final}}') &= (0,2,2^2,2^3,2^4,\dots,2^{2k},2^{2k+1},2^{2k+2},0),
\end{align*}
which is just $S_{k+1}$.

\printbibliography

\end{document}